\newcommand{\R}{\mathbb R}
\newcommand{\N}{\mathbb N}
\newcommand{\E}{\mathbb E}
\newcommand{\Pro}{\mathbb P}
\newcommand{\Var}{\mathrm{Var}}
\def\dint{\textup{d}}
\newcommand{\SSS}{\ensuremath{{\mathbb S}}}
\newcommand{\B}{\ensuremath{{\mathbb B}}}
\newcommand{\eps}{\varepsilon}
\newcommand{\eqdistr}{\stackrel{d}{=}}
\newcommand{\todistr}{\overset{d}{\underset{n\to\infty}\longrightarrow}}
\newcommand{\todistrnoN}{\overset{d}{\underset{}\longrightarrow}}
\newtheorem{thm}{Theorem}[section]
\newtheorem{lemma}[thm]{Lemma}
\newtheorem{df}[thm]{Definition}
\newtheorem{proposition}[thm]{Proposition}
\theoremstyle{definition}
\newtheorem{rmk}[thm]{Remark}
\def\bX{\mathbf{X}}
\def\bY{\mathbf{Y}}
\begin{document}


\title[]{Yet another note on the \\ arithmetic-geometric mean inequality}

\author[Z. Kabluchko]{Zakhar Kabluchko}
\address{Zakhar Kabluchko: Institut f\"ur Mathematische Stochastik, Westf\"alische Wilhelms-Uni\-ver\-sit\"at M\"unster, Germany}
\email{zakhar.kabluchko@uni-muenster.de}

\author[J. Prochno]{Joscha Prochno}
\address{Joscha Prochno: Institut f\"ur Mathematik \& Wissenschaftliches Rechnen, Karl-Franzens-Universit\"at Graz, Austria} \email{joscha.prochno@uni-graz.at}

\author[V. Vysotsky]{Vladislav Vysotsky}
\address{Vladislav Vysotsky: Department of Mathematics, University of Sussex, United Kingdom, and St.\ Petersburg Department of Steklov Mathematical Institute} \email{v.vysotskiy@sussex.ac.uk}

\keywords{arithmetic-geometric mean inequality; central limit theorem; large deviations; large deviations principle}
\subjclass[2010]{Primary: 52A23, 60F05, 60F10 Secondary: 46B06, 46B07}

\thanks{J. Prochno is supported by a Visiting Professor Fellowship of the Ruhr University Bochum and its Research School PLUS as well as by the Austrian Science Fund (FWF) Project F5508-N26, which is part of the Special Research Program ``Quasi-Monte Carlo Methods: Theory and Applications''. Part of this work was done while J. Prochno visited V. Vysotsky at the University of Sussex and Z. Kabluchko at the University of M\"unster. We thank the departments for their financial support.}


\begin{abstract}
It was shown by E.\ Gluskin and V.D.\ Milman in [GAFA Lecture Notes in Math.\ 1807, 2003] that the classical arithmetic-geometric mean inequality can be reversed (up to a  multiplicative constant) with high probability, when applied to coordinates of a point chosen with respect to the surface unit measure on a high-dimensional Euclidean sphere. We present here two asymptotic refinements of this phenomenon in the more general setting of the surface probability measure on a high-dimensional $\ell_p$-sphere, and also show that sampling the point according to either the cone probability measure on $\ell_p$ or the uniform distribution on the ball enclosed by $\ell_p$ yields the same results. First, we prove a central limit theorem, which allows us to identify the precise constants in the reverse inequality. Second, we prove the large deviations counterpart to the central limit theorem, thereby describing the asymptotic behavior beyond the Gaussian scale, and identify the rate function. 
\end{abstract}

\maketitle

\tableofcontents

\section{Introduction and main results}

The classical inequality of arithmetic and geometric means states that the arithmetic mean of a finite sequence of non-negative real numbers is greater than or equal to the geometric mean of the sequence, i.e., for any $n\in\N$ and $x_1,\dots,x_n\in[0,\infty)$,
\[
\bigg(\prod_{i=1}^{n}x_i\bigg)^{1/n} \leq \frac{1}{n}\sum_{i=1}^n x_i
\]
with equality if and only if $x_1=\dots=x_n$. This inequality may be written in the form
\[
\bigg(\prod_{i=1}^{n}y_i\bigg)^{1/n} \leq \sqrt{\frac{1}{n}\sum_{i=1}^n y_i^2},
\]
where $y_1,\dots,y_n>0$. This means that if $y=(y_1,\dots,y_n)$ is an element of the Euclidean unit sphere $\SSS^{n-1}$, then
\[
\bigg(\prod_{i=1}^{n}|y_i|\bigg)^{1/n} \leq \frac{1}{\sqrt{n}}.
\]
A natural question is how sharp this inequality is for a typical point on the sphere. In \cite[Proposition 1]{GM2003}, Gluskin and Milman showed that for large $n\in\N$ the arithmetic and geometric means are actually equivalent (up to multiplicative constants) with very high probability. More precisely, if we denote by $\sigma$ the rotationally invariant surface probability measure on $\SSS^{n-1}$, then for any $t\in(0,\infty)$,
\begin{align}\label{ineq: gluskin milman}
\sigma\left(\bigg\{x\in\SSS^{n-1}\,:\, \bigg(\prod_{i=1}^{n}|x_i|\bigg)^{1/n} \geq t \cdot \frac{1}{\sqrt{n}}\bigg\} \right) \geq 1-\big(1.6\sqrt{t}\,\big)^n.
\end{align}
In other words, if we sample a point uniformly at random on the unit Euclidean sphere, then it will satisfy a reverse (up to constant) arithmetic-geometric mean inequality with high probability. Alternatively, we can sample a point uniformly at random from the unit Euclidean ball, and put it to the sphere dividing by its norm.

The problem has been revisited by Aldaz in \cite[Theorem 2.8]{A2010}, who showed that for every $k,\varepsilon>0$ there exists an $N=N(k,\varepsilon)\in\N$ such that for every $n\geq N$,
\begin{align}\label{ineq:concentration aldaz}
\sigma\left(\bigg\{x\in\SSS^{n-1}\,:\, \frac{(1-\varepsilon)e^{-\frac{1}{2}(\gamma + \log 2)}}{\sqrt{n}}< \bigg(\prod_{i=1}^{n}|x_i|\bigg)^{1/n} < \frac{(1+\varepsilon) e^{-\frac{1}{2}(\gamma + \log 2)}}{\sqrt{n}}\bigg\} \right) \geq  1-\frac{1}{n^k},
\end{align}
where $\gamma$ denotes Euler's constant. This identifies the exact constant $e^{-\frac{1}{2}(\gamma + \log 2)}$ around which the ratio of geometric and arithmetic means concentrates. Aldaz also obtained a similar result for points chosen on the $\ell_1^n$-sphere (with concentration around the constant $e^{-\gamma}$) and studied weighted versions of the arithmetic-geometric mean inequality. For a refinement of the arithmetic-geometric mean inequality, we refer the reader to \cite{A2008}. 
\vskip 2mm

In this note we complement the inequalities \eqref{ineq: gluskin milman} and \eqref{ineq:concentration aldaz} by finding the (logarithmic) asymptotics of their left-hand sides. Our first observation is the following central limit theorem. We state and prove it in the setting of the inequality between geometric and $p$-generalized means, which says that for all $p >0$, $n\in\N$, and $x_1,\dots,x_n\in \R$,
\[
\bigg(\prod_{i=1}^{n}|x_i|\bigg)^{1/n} \leq \bigg(\frac{1}{n}\sum_{i=1}^n |x_i|^p\bigg)^{1/p}.
\]
Since both sides of this inequality scale linearly, we can assume that the right-hand side equals (or does not exceed) one. If $1\leq p <\infty$, this means that  $(x_1,\ldots,x_n)$ belongs to  a unit $\ell_p^n$-sphere (or an $\ell_p^n$-ball), for which we use the respective standard notation
\[
\SSS_p^{n-1} = \{x\in\R^n \,:\, \|x\|_p=1 \} \quad\text{and}\quad \B_p^n = \big\{x\in\R^n:\|x\|_p\leq 1\big\} 
\]
with the $\|\cdot\|_p$-norm of $x=(x_1,\dots,x_n)\in\R^n$ given by
\[
\|x\|_p = \bigg(\sum_{i=1}^n |x_i|^p\bigg)^{1/p}.
\]
Then it is natural to study the behavior of the geometric mean for typical $x_i$'s, which corresponds to choosing $x$ at random. There are several natural probability measures on $\B_p^n$ and $\SSS_p^{n-1}$. Restricting the Lebesgue measure to $\B_p^n$ and normalizing it, we obtain the uniform probability distribution on $\B_p^n$.
We shall also consider the surface probability measure and cone probability measure on $\SSS_p^{n-1}$, which we denote respectively by $\sigma_p$ and $\mu_p$; see Subsection \ref{subsec:lp balls} for the definition of $\mu_p$.

Recall that for $x>0$, the digamma function $\psi$ is defined via
\[
\psi(x) := \frac{\Gamma'(x)}{\Gamma(x)}\,,
\]
where $\Gamma$ denotes the gamma function and $\Gamma'$ its derivative.

\begin{thm} \label{thm:CLT}
Let $n\in\N$ and $p\in[1,\infty)$. Suppose that $\bX_n=(X_1^{(n)}, \ldots, X_n^{(n)})$ is a random vector that is either uniformly distributed over $\B_p^n$ or distributed according to $\mu_p$ or $\sigma_p$ on $\SSS_p^{n-1}$. Then, for every $a\in\R$,
$$
\lim_{n \to \infty} \Pro\bigg[ \Big(\prod_{i=1}^{n}|X_i^{(n)}|\Big)^{1/n} \geq e^{m_p} \Big(1 + \frac a {\sqrt{n}}\Big) \cdot \Big(\frac1n \sum\limits_{i=1}^n {|X_i^{(n)}|}^p\Big)^{1/p} \bigg] = 1 - \Phi \left( \frac{pa}{\sqrt{\psi'(\frac1p)-p}} \right),
$$
where $\Phi$ denotes the distribution function of a standard normal random variable and
\[
m_p:=\frac{\psi(\frac{1}{p}) + \log p }{p}
\]
is a negative constant only depending on $p$.
\end{thm}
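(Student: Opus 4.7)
The plan is to exploit the scale invariance of the random ratio
\[
R_n := \frac{\bigl(\prod_{i=1}^n |X_i^{(n)}|\bigr)^{1/n}}{\bigl(\frac{1}{n}\sum_{i=1}^n |X_i^{(n)}|^p\bigr)^{1/p}}
\]
to reduce all three distributional assumptions to a single i.i.d.\ computation. Let $Y_1, \dots, Y_n$ be i.i.d.\ real random variables with density proportional to $e^{-|y|^p}$. The classical Schechtman--Zinn representation gives $Y/\|Y\|_p \sim \mu_p$, and the same radial decomposition shows that $X/\|X\|_p \sim \mu_p$ whenever $X$ is uniform on $\B_p^n$. Since $R_n$ is positively homogeneous of degree $0$ and thus depends only on the direction of $X$, both the cone measure case and the uniform-on-ball case reduce to studying $R_n$ computed directly from the $Y_i$; the surface measure case will be recovered at the end by a comparison argument.

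Working with the i.i.d.\ $Y_i$, I will set $U_i := \log|Y_i|$ and $V_i := |Y_i|^p$, so that
\[
\log R_n = \bar U_n - \frac{1}{p}\log \bar V_n,
\]
and reduce the CLT for $\log R_n$ to the bivariate CLT for $(\bar U_n, \bar V_n)$ combined with the delta method applied to $\log \bar V_n$ at its mean $1/p$. The substitution $W := |Y_1|^p \sim \mathrm{Gamma}(1/p, 1)$ turns every relevant moment into a gamma-function integral: $\E V_1 = 1/p$ and $\E U_1 = \psi(1/p)/p$ produce the centering constant $m_p$, while the classical identities $\Var(\log W) = \psi'(\alpha)$, $\Var(W) = \alpha$ and $\mathrm{Cov}(W, \log W) = 1$ applied at $\alpha = 1/p$ collapse the asymptotic variance $\Var(U_1) - 2\,\mathrm{Cov}(U_1, V_1) + \Var(V_1)$ to $(\psi'(1/p) - p)/p^2$. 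Rewriting the event $\{R_n \geq e^{m_p}(1 + a/\sqrt n)\}$ as $\{\sqrt n(\log R_n - m_p) \geq a + o(1)\}$ and passing to the Gaussian limit then produces exactly $1 - \Phi\bigl(ap/\sqrt{\psi'(1/p) - p}\bigr)$.

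To handle the surface measure $\sigma_p$, I will use that $d\sigma_p/d\mu_p$ is proportional to $h(x) := \bigl(\sum_{i=1}^n |x_i|^{2(p-1)}\bigr)^{1/2}$. A direct variance computation via the i.i.d.\ representation (together with the concentration of $\|Y\|_p^p$ around $n/p$) will give $\Var_{\mu_p}(h)/(\E_{\mu_p} h)^2 = O(1/n)$; the case $p = 2$ is trivial because there $h \equiv 1$. Then Cauchy--Schwarz delivers, for any sequence of events $A_n$,
\[
\bigl|\E_{\sigma_p}[\mathbf{1}_{A_n}] - \E_{\mu_p}[\mathbf{1}_{A_n}]\bigr| = \frac{|\mathrm{Cov}_{\mu_p}(\mathbf{1}_{A_n}, h)|}{\E_{\mu_p} h} \leq \frac{\mathrm{SD}_{\mu_p}(h)}{\E_{\mu_p} h} = O(n^{-1/2}),
\]
which is negligible at the Gaussian scale and transports the limit from $\mu_p$ to $\sigma_p$.

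The main technical obstacle I foresee is precisely this surface-measure comparison: to rule out a shift of the Gaussian fluctuations on the very $n^{-1/2}$ scale being resolved, one has to verify the above second-moment bound on $h$, rather than merely a law of large numbers. Everything else is a routine application of the bivariate CLT and the delta method, with the only real bookkeeping being the gamma-function identities that simultaneously pin down $m_p$ and the limiting variance.
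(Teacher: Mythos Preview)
Your approach is essentially the same as the paper's: both reduce to the i.i.d.\ Schechtman--Zinn representation, apply a bivariate CLT to $(\log|Z_i|,|Z_i|^p)$ together with a first-order expansion of the logarithm (you call it the delta method, the paper writes out $\log(1+x)=x+\alpha(x)$ and uses Slutsky), and then transfer from $\mu_p$ to $\sigma_p$ via an $O(n^{-1/2})$ bound on the discrepancy. The only differences are cosmetic: your normalization $e^{-|y|^p}$ makes $|Y_1|^p\sim\mathrm{Gamma}(1/p,1)$ and lets you read off all moments from standard digamma identities (the paper uses $e^{-|z|^p/p}$ and invokes Mathematica), and for the surface measure you sketch a direct Cauchy--Schwarz/variance proof of the $O(n^{-1/2})$ total-variation bound whereas the paper simply cites the Naor--Romik result $d_{\mathrm{TV}}(\mu_p,\sigma_p)\le c_p n^{-1/2}$.
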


In other words, the sequence of (random) ratios of geometric and $p$-generalized means, given by
\begin{equation}\label{eq:ratio agm}
\mathcal R_n := \frac{\Big(\prod\limits_{i=1}^{n}|X_i^{(n)}|\Big)^{1/n}}{ \Big(\frac1n \sum\limits_{i=1}^n {|X_i^{(n)}|}^p\Big)^{1/p}}\,,\quad n\in\N,
\end{equation}
satisfies a central limit theorem with the normalization $\sqrt n (e^{-m_p} \mathcal R_n -1)$. 

\begin{rmk}
Some particular values of $e^{m_p}$ for $p\in\{1,2,4\}$ and $p\to+\infty$ are
\[
e^{m_p} =
\begin{cases}
 e^{-\gamma} \approx 0.561 & :\, p=1 \\
 \exp \Big(-\frac{\gamma+\log 2}{2}\Big) \approx 0.529 & :\, p=2 \\
 \exp \Big(-\frac{2 \gamma +\pi + 2 \log 2}{8} \Big) \approx 0.491 & :\, p= 4\\
e^{-1} & :\, p \to \infty ,
\end{cases}
\]
and $\psi'(1)=\frac{\pi^2}{6}$, $\psi'(\frac12)=\frac{\pi^2}{2}$ (see \cite[Section~8.366]{GradshteynRyzhik}). Let us also note that, setting $a=0$ in Theorem \ref{thm:CLT}, we obtain
$$
\lim_{n \to \infty} \Pro\bigg[ \Big(\prod_{i=1}^{n}|X_i^{(n)}|\Big)^{1/n} \leq e^{m_p} \cdot \Big(\frac1n \sum\limits_{i=1}^n {|X_i^{(n)}|}^p\Big)^{1/p} \bigg] = \frac{1}{2},
$$
which means that with probability approaching $1/2$ the inequality between geometric and $p$-generalized means can be improved with the multiplicative constant $e^{m_p}<1$. Similarly, using Theorem \ref{thm:CLT} with $a\to +\infty$ and $a\to -\infty$, we see that  with probability approaching $1$, the inequality holds true with  any constant $c>e^{m_p}$ but ceases to hold with any constant $c<e^{m_p}$. The precise rate of convergence of these probabilities will be identified in our large deviations result, Theorem~\ref{thm:ldp}.
\end{rmk}

\begin{rmk}\label{rem: reduced form}
As will be shown at the end of the proof of Theorem \ref{thm:CLT}, we also have
\[
\lim_{n \to \infty} \Pro\bigg[ \Big(\prod_{i=1}^{n}|X_i^{(n)}|\Big)^{1/n} \geq e^{m_p} \Big(1 + \frac a {\sqrt{n}}\Big) \cdot n^{-1/p} \bigg] = 1 - \Phi \left( \frac{pa}{\sqrt{\psi'(\frac1p)-p}} \right),
\]
which is of course trivially follows from the theorem if the distribution of $\bX_n$ is $\mu_p$ or $\sigma_p$.
\end{rmk}


Our second observation concerns large deviations of the ratio $\mathcal R_n$ given in \eqref{eq:ratio agm}. While large deviations are extensively studied in probability theory (see, e.g., \cite{DZ,dH} and the references cited therein), they have not been considered -- contrary to central limit theorems --  in geometric functional analysis until the very recent paper by Gantert, Kim, and Ramanan \cite{GKR}. Already shortly after, this work has been extended and complemented in \cite{APT2018, KPT17CCM, KPT2018, Kim2017, KimRamanan2015}. In contrast to the universality in central limit theorems, the probabilities of  (large) deviations on the scale of laws of large numbers, are non-universal, thus being sensitive to the distribution of the  random variables considered. This non-universality is reflected by the so-called rate function, which essentially defines the large deviations probabilities.

\begin{thm}\label{thm:ldp}
Let $n \in \N$ and $p\in[1,\infty)$. Suppose that $\bX_n=(X_1^{(n)}, \ldots, X_n^{(n)})$ is a random vector that is either uniformly distributed over $\B_p^n$ or distributed according to $\mu_p$ or $\sigma_p$ on $\SSS_p^{n-1}$. Then
$$
\lim_{n \to \infty} \frac1n \log \Pro\bigg[ \Big(\prod_{i=1}^{n}|X_i^{(n)}|\Big)^{1/n} \geq \theta \cdot \Big(\frac1n \sum\limits_{i=1}^n {|X_i^{(n)}|}^p\Big)^{1/p} \bigg] = -\mathcal J_p(\theta), \qquad \theta \in [e^{m_p},1),
$$
and
$$
\lim_{n \to \infty} \frac1n \log \Pro\bigg[ \Big(\prod_{i=1}^{n}|X_i^{(n)}|\Big)^{1/n} \leq \theta \cdot \Big(\frac1n \sum\limits_{i=1}^n {|X_i^{(n)}|}^p\Big)^{1/p} \bigg] = -\mathcal J_p(\theta), \qquad \theta \in (0,e^{m_p}],
$$
where for $\theta \in (0,1)$,
\[
\mathcal J_p(\theta)  := [p G_p(\theta) -1] \log(\theta) + G_p(\theta) \big[\log\big(G_p(\theta)\big) - 1\big] -\log \Gamma\big(G_p(\theta)\big)  + \frac{1}{p} + \frac{1}{p}\log(p) + \log \Gamma\left(\frac 1p\right)
\]
with $G_p(\theta) := H^{-1}\big(p\log(\theta)\big)$, where $H:(0,\infty) \to (-\infty,0)$ is an increasing bijection given by
\[
H(x) := \psi(x)-\log(x).
\]
The function $\mathcal J_p$ is non-negative,
satisfies $\mathcal J_p(e^{m_p})=0$, and $\mathcal J_p(0+) = \mathcal J_p(1-) = +\infty.$
\end{thm}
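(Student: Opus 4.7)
The plan is to reduce all three distributional cases to a single calculation for i.i.d.\ Gamma variables, and then to obtain the rate function via Cram\'er's theorem in $\R^2$ followed by the contraction principle.

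First I would use the standard Schechtman--Zinn probabilistic representation: if $Y_1, \ldots, Y_n$ are i.i.d.\ with density proportional to $e^{-|y|^p/p}$ on $\R$, then $\bY_n/\|\bY_n\|_p \sim \mu_p$, and $U^{1/n}\bY_n/\|\bY_n\|_p$ is uniform on $\B_p^n$ for an independent $U \sim \mathrm{Unif}(0,1)$. A key observation is that the ratio $\mathcal{R}_n$ is invariant under the scaling $\bX_n \mapsto c\bX_n$ for $c > 0$, so in both the $\mu_p$ case and the uniform-on-$\B_p^n$ case one obtains the distributional identity
\[
\mathcal{R}_n \;\stackrel{d}{=}\; \frac{\big(\prod_{i=1}^n |Y_i|\big)^{1/n}}{\big(\frac{1}{n}\sum_{i=1}^n |Y_i|^p\big)^{1/p}}.
\]
The $\sigma_p$ case will be deferred to the end and handled by comparison of Radon--Nikodym densities against $\mu_p$, which give only polynomial-in-$n$ corrections and hence do not affect the logarithmic asymptotics.

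Next I would substitute $W_i := |Y_i|^p/p$; a direct change of variables shows that the $W_i$ are i.i.d.\ $\Gamma(1/p, 1)$-distributed, and a short algebraic manipulation yields the clean identity
\[
p \log \mathcal{R}_n \;=\; \frac{1}{n}\sum_{i=1}^n \log W_i \;-\; \log\!\Big(\frac{1}{n}\sum_{i=1}^n W_i\Big).
\]
Since the cumulant generating function
\[
\Lambda(\lambda, \tau) \;=\; \log \E\!\big[W^\lambda e^{\tau W}\big] \;=\; \log\Gamma(\lambda+\tfrac{1}{p}) - \log\Gamma(\tfrac{1}{p}) - (\lambda+\tfrac{1}{p})\log(1-\tau)
\]
is explicit and essentially smooth and steep on its domain $\{\lambda > -1/p,\ \tau < 1\}$, Cram\'er's theorem in $\R^2$ furnishes a full LDP at speed $n$ for the joint empirical mean $\big(\frac{1}{n}\sum \log W_i,\,\frac{1}{n}\sum W_i\big)$ with good rate function $I = \Lambda^\ast$.

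Then I would apply the contraction principle with the continuous map $(u,v) \mapsto u - \log v$. Solving the first-order conditions in the supremum defining $I(u,v)$ yields $1 - \tau = (\lambda + 1/p)/v$ and, with $\alpha := \lambda + 1/p$, the elegant relation $u - \log v = \psi(\alpha) - \log \alpha = H(\alpha)$. The constraint $u - \log v = p\log\theta$ therefore forces $\alpha = G_p(\theta)$ directly, \emph{independently} of $v$, so that after back-substitution the remaining minimization in $v$ reduces to $\min_v[v - \tfrac{1}{p}\log v]$, attained at $v = 1/p$. Plugging $v = 1/p$, $\alpha = G_p(\theta)$ and $H(\alpha) = p\log\theta$ into $I(u,v)$ and collecting terms reproduces exactly the formula claimed for $\mathcal{J}_p(\theta)$.

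The stated properties of $\mathcal{J}_p$ then follow transparently: $\mathcal{J}_p(e^{m_p}) = 0$ because $\alpha = 1/p$ at that point; non-negativity is automatic from the LDP; the divergence $\mathcal{J}_p(0+) = +\infty$ comes from $G_p(0+) = 0$ (via the $\log\theta$ term), and $\mathcal{J}_p(1-) = +\infty$ from $G_p(1-) = +\infty$ combined with Stirling's asymptotics for $\log\Gamma$. The main obstacle is the contraction computation: one needs the miraculous decoupling described above, in which the optimal $v$ does not depend on the constraint parameter $\theta$, together with verifying that Cram\'er's theorem genuinely applies to $\frac{1}{n}\sum \log W_i$ despite the unbounded lower tail of $\log W$, which restricts the natural admissible range of $\lambda$ to $\lambda > -1/p$.
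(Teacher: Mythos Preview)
Your proposal is correct and follows essentially the same route as the paper: Schechtman--Zinn representation, Cram\'er's theorem in $\R^2$ for the pair of empirical means, the contraction principle, and the $\sigma_p$ case via polynomial bounds on the Radon--Nikodym density $d\sigma_p/d\mu_p$. Your substitution $W_i=|Y_i|^p/p\sim\Gamma(1/p,1)$ is an affine reparametrization of the paper's choice of coordinates $(\log|Z_i|,|Z_i|^p)$; in particular your $\alpha=\lambda+1/p$ is the paper's $(s^*+1)/p$ and your optimal $v=1/p$ corresponds to the paper's optimal $\beta=1$, so the ``miraculous decoupling'' you describe and the resulting one-variable minimization are exactly the same phenomenon in both proofs.
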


\begin{center}
\includegraphics[height=6cm]{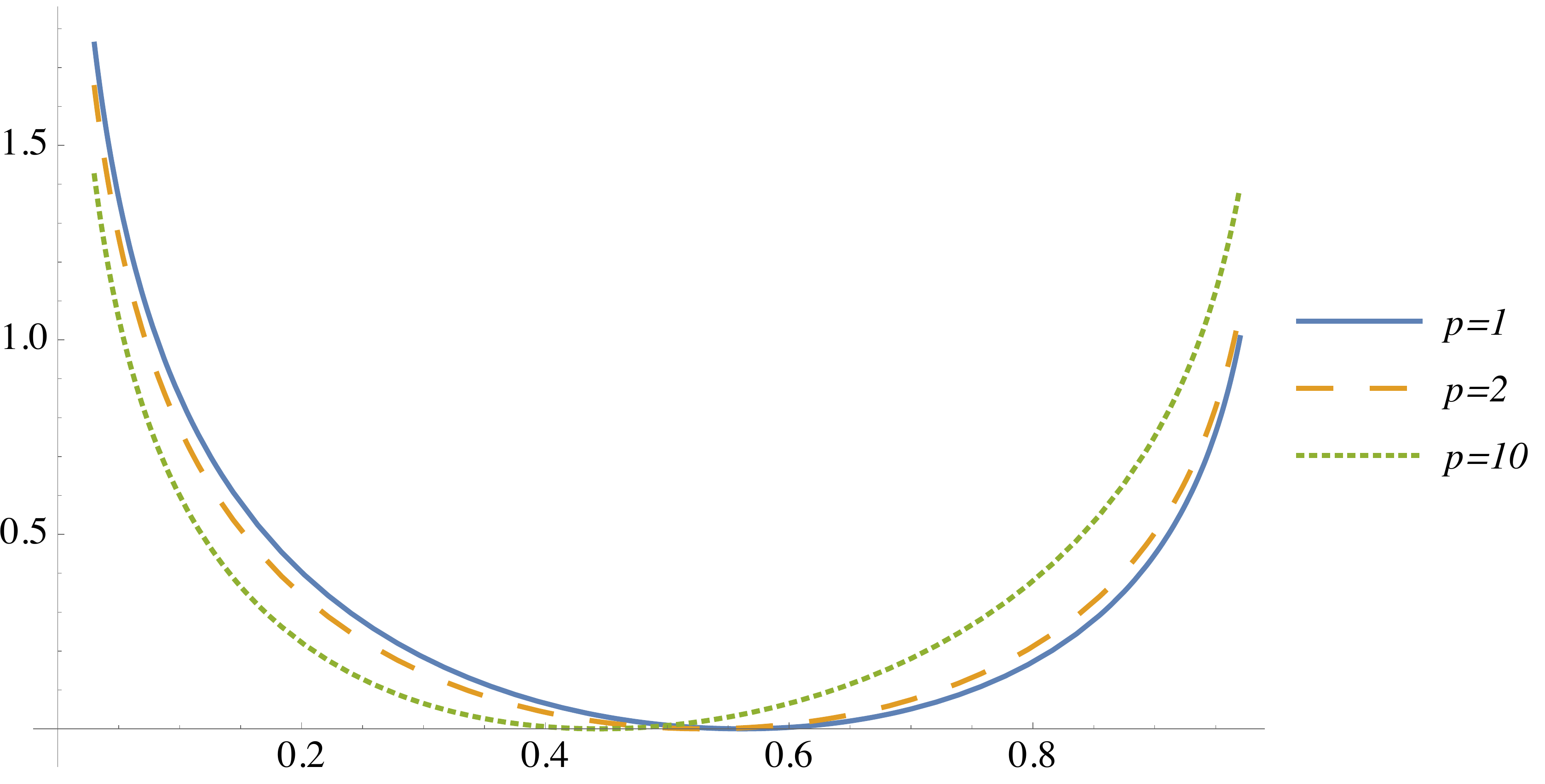}
\label{fig}

Figure~\ref{fig}. The rate function $\mathcal J_p$ for $p=1,2,10$.
\end{center}	

\begin{rmk} \label{rem}
(a) Note that the equality $\mathcal J_p(e^{m_p})=0$ agrees with Theorem~\ref{thm:CLT}. This equality also follows directly from the identity $m_p=\frac1p H(\frac1p)$.

\noindent (b) In fact, we shall prove the following so-called large deviations principle for $\mathcal R_n$ with the rate function $\mathcal J_p$. Put $\mathcal J_p \equiv +\infty$ on $\R \backslash (0,1)$, then for all Borel measurable sets $A\subseteq \R$,
\begin{equation*}
-\inf_{\theta\in A^\circ}\mathcal J_p(\theta) \leq\liminf_{n\to\infty}{1\over n}\log \Pro[\mathcal R_n\in A]
\leq\limsup_{n\to\infty}{1\over n}\log \Pro[\mathcal R_n \in A] \leq -\inf_{\theta\in\overline{A}}\mathcal J_p(\theta),
\end{equation*}
where $A^\circ$ and $\overline A$ refer to the interior and closure of $A$, respectively.

\noindent (c) In the setting of the uniform distribution on the sphere, Theorems \ref{thm:CLT} and \ref{thm:ldp} use results for the Radon--Nikodym density of $\mu_p$ and $\sigma_p$ (see Proposition \ref{prop:relation cone surface measure} and Lemma \ref{lem:estimate density}). Their extension to the regime $0<p<1$ are not fully known (see, e.g., \cite[Section 5, Comment (5)]{NR2003}), so for the uniform distribution on the sphere we remain in the regime $1\leq p<+\infty$. For the other two distributions one can easily see that the results continue to hold in the regime $0<p<1$ by following verbatim the proofs presented.
\end{rmk}

\section{Preliminaries}

We shall present here the notation and background material used throughout the text. We split this into appropriate subsections. Having a broad readership from both probability theory and geometric functional analysis in mind, we introduce the material on large deviations in slightly more detail.

\subsection{Notation}

We denote by $\R^n$ the $n$-dimensional Euclidean space and equip this with its standard inner product for which we write $\langle\cdot,\cdot \rangle$. For a subset $A$ of $\R^n$, we denote by $A^\circ$ the interior of $A$ and by $\bar{A}$ its closure. For a Borel measurable set $A\subseteq \R^n$, we shall denote by $|A|$ its $n$-dimensional Lebesgue measure. We shall also use the asymptotic notation $\sim$ to denote that the ratio of functions or sequences tends to $1$.

\subsection{The $\ell_p^n$-balls}\label{subsec:lp balls}

Let us recall some material regarding the geometry and probability of $\ell_p^n$-balls. For any $p\in[1,+\infty)$ the $\ell_p^n$-norm of $x=(x_1,\ldots,x_n)\in\R^n$ is given by
\[
\|x\|_p :=
\Big(\sum\limits_{i=1}^n|x_i|^p\Big)^{1/p}.
\]
For any $n$ and $p$ let us denote by $\B_p^n:=\{x\in\R^n:\|x\|_p\leq 1\}$ the unit ball and by $\SSS_p^{n-1}:=\{x\in\R^n:\|x\|_p=1\}$ unit sphere with respect to this norm. The restriction of the Lebesgue measure to $\B_p^n$ provides a natural volume measure on $\B_p^n$. We will supply $\SSS_p^{n-1}$ with the cone probability measure $\mu_p$ defined as follows: for a Borel set $A\subseteq \SSS_p^{n-1}$,
\begin{align}\label{defn:cone}
\mu_p(A) := \frac{|\{rx:x\in A,r\in[0,1]\}|}{|\B_p^n|}\,.
\end{align}
Let $\sigma_p$ be the $(n-1)$-dimensional Hausdorff probability measure or, equivalently, the $(n-1)$-dimensional normalized Riemannian volume measure on $\SSS_p^{n-1}$, $p \in[1,+\infty)$. We remark that the cone measure $\mu_p$ coincides with $\sigma_p$ if and only if $p=1$ and $p=2$. In particular, $\mu_2$ is the same as the normalized spherical Lebesgue measure. We shall use the following result on the form of the Radon--Nikodym density of cone and surface measure, which was proved in \cite[Lemma 2]{NR2003}.
\begin{proposition}\label{prop:relation cone surface measure}
Let $n\in\N$ and $1\leq p <+\infty$. Then, for all $x=(x_1,\dots,x_n)\in\SSS^{n-1}_p$,
\[
h_{n,p}(x):= \frac{\dint \sigma_p}{\dint \mu_p}(x) = C_{n,p}\cdot \Big(\sum_{i=1}^n|x_i|^{2p-2}\Big)^{1/2},
\]
where
\[
C_{n,p} := \bigg(\int_{\SSS_p^{n-1}} \Big(\sum_{i=1}^n|x_i|^{2p-2}\Big)^{1/2} \, \mu_p(\dint x) \bigg)^{-1}.
\]
\end{proposition}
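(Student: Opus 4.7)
My plan is to identify two \emph{polar} decompositions of the Lebesgue measure on $\R^n$ adapted to the $\ell_p^n$-geometry, one using the cone measure $\mu_p$ and the other the surface measure $\sigma_p$, and to read off the density by comparing them.

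First, directly from the definition of $\mu_p$, the map $(r,x)\mapsto rx$ sends $[0,R]\times A$ (for Borel $A\subseteq \SSS_p^{n-1}$) to a subset of $\R^n$ of Lebesgue measure $R^n |\B_p^n|\mu_p(A)$. Differentiating in $R$ and decomposing an arbitrary non-negative Borel $g$ as an integral of indicators of such radial ``cones'' yields the polar formula
\[
\int_{\R^n} g(y)\,\dint y \;=\; n|\B_p^n|\int_0^{\infty}\!\!r^{n-1}\!\int_{\SSS_p^{n-1}} g(rx)\,\dint\mu_p(x)\,\dint r.
\]

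Second, I would apply the coarea formula to the $1$-Lipschitz function $F(y):=\|y\|_p$, whose level sets are exactly the dilated spheres $r\SSS_p^{n-1}$. On the open, full-measure set $\{y:\ y_i\neq 0,\ i=1,\ldots,n\}$ a direct differentiation yields
\[
|\nabla F(y)| \;=\; \|y\|_p^{1-p}\Bigl(\sum_{i=1}^n |y_i|^{2p-2}\Bigr)^{1/2},
\]
which is $0$-homogeneous in $y$ and therefore depends only on the radial projection $x=y/\|y\|_p$; evaluated at such $x\in\SSS_p^{n-1}$ it equals $\bigl(\sum_i |x_i|^{2p-2}\bigr)^{1/2}$. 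Using the coarea formula, together with the fact that $\mathcal H^{n-1}$ restricted to $r\SSS_p^{n-1}$ pulls back under $x\mapsto rx$ to $r^{n-1}$ times $\mathcal H^{n-1}$ on $\SSS_p^{n-1}$, and writing $\sigma_p=\mathcal H^{n-1}/S_{n,p}$ with $S_{n,p}:=\mathcal H^{n-1}(\SSS_p^{n-1})$, I obtain
\[
\int_{\R^n} g(y)\,\dint y \;=\; S_{n,p}\int_0^{\infty}\!\!r^{n-1}\!\int_{\SSS_p^{n-1}}\frac{g(rx)}{\bigl(\sum_i|x_i|^{2p-2}\bigr)^{1/2}}\,\dint\sigma_p(x)\,\dint r.
\]

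Comparing the two representations on the product space $(0,\infty)\times \SSS_p^{n-1}$ and cancelling the common factor $r^{n-1}\dint r$ immediately gives
\[
\frac{\dint\sigma_p}{\dint\mu_p}(x) \;=\; \frac{n|\B_p^n|}{S_{n,p}}\Bigl(\sum_{i=1}^n|x_i|^{2p-2}\Bigr)^{1/2},
\]
and since $\sigma_p$ is a probability measure, the prefactor is forced to be the normalization constant $C_{n,p}$ in the statement. The one technical obstacle I anticipate is that $F=\|\cdot\|_p$ fails to be $C^1$ on the coordinate hyperplanes $\{y_i=0\}$ (genuinely an issue only when $p=1$); I would deal with this by observing that these hyperplanes intersect $\SSS_p^{n-1}$ in an $\mathcal H^{n-1}$-null set and invoking the Lipschitz version of the coarea formula, or equivalently by restricting the entire argument to the smooth stratum of the sphere and extending by continuity of both sides.
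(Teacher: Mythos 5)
Your argument is correct. Note, however, that the paper itself does not prove this proposition: it is quoted from Naor--Romik \cite[Lemma~2]{NR2003}, so what you have written is in effect a self-contained reconstruction of the argument behind that citation, and it is the natural one. The two ingredients are exactly right: the polar integration formula $\int_{\R^n}g(y)\,\dint y=n|\B_p^n|\int_0^\infty r^{n-1}\int_{\SSS_p^{n-1}}g(rx)\,\mu_p(\dint x)\,\dint r$, which follows from the definition \eqref{defn:cone} by scaling and a monotone-class argument, and the coarea formula for the Lipschitz function $F=\|\cdot\|_p$ with $|\nabla F(y)|=\|y\|_p^{1-p}\bigl(\sum_i|y_i|^{2p-2}\bigr)^{1/2}$, whose $0$-homogeneity lets you pull the weight back to the sphere. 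Comparing the two decompositions against test functions of the form $g(y)=\phi(y/\|y\|_p)\rho(\|y\|_p)$ identifies $\dint\mu_p=\tfrac{S_{n,p}}{n|\B_p^n|}\,w^{-1}\,\dint\sigma_p$ with $w(x)=\bigl(\sum_i|x_i|^{2p-2}\bigr)^{1/2}$; to invert this into the stated formula for $\dint\sigma_p/\dint\mu_p$ you should remark that $w$ is bounded away from $0$ and $\infty$ on $\SSS_p^{n-1}$ (this is exactly Lemma~\ref{lem:estimate density} of the paper, or your own Hölder bound), and the normalization $C_{n,p}=n|\B_p^n|/S_{n,p}$ then follows by integrating against $\mu_p$, as you say. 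Your handling of the nonsmooth set is also adequate: for $p>1$ the function $F$ is $C^1$ off the origin, and for $p=1$ the Lipschitz coarea formula applies with $|\nabla F|=\sqrt n$ a.e.; the only cosmetic caveat is that for $p=1$ the expression $\sum_i|x_i|^{2p-2}$ should be read on the full-$\sigma_p$-measure stratum where all coordinates are nonzero (so that the density is the constant $\sqrt n/\sqrt n=1$, recovering $\mu_1=\sigma_1$), which is the same convention implicit in the statement.
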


We refer to \cite{NaorTAMS,NR2003} for more details on the relation between these two measures. We shall also use the following result. 

\begin{lemma}\label{lem:estimate density}
Let $1\leq p < +\infty$. There is a constant $C=C(p)\in(0,+\infty)$ such that, for all $n\in\N$ and every $x\in\SSS_p^{n-1}$,
\[
n^{-C} \leq h_{n,p}(x) \leq n^{C}.
\]
\end{lemma}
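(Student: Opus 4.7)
The plan is to establish the bound in two elementary steps: first, controlling the pointwise factor $\sum_{i=1}^n |x_i|^{2p-2}$ appearing in Proposition \ref{prop:relation cone surface measure} uniformly in $x \in \SSS_p^{n-1}$, and then deducing an analogous bound for the normalizing constant $C_{n,p}$.

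For the first step, I would introduce $y_i := |x_i|^p$, so that $(y_1, \ldots, y_n)$ lies in the probability simplex in $\R^n$, and rewrite
\[
\sum_{i=1}^n |x_i|^{2p-2} \;=\; \sum_{i=1}^n y_i^q, \qquad q := 2 - \tfrac{2}{p}.
\]
Optimizing the symmetric function $\sum y_i^q$ over the simplex is a standard exercise: if $p \geq 2$ then $q \geq 1$ and $t \mapsto t^q$ is convex, so the sum is maximized at a vertex (giving $1$) and, by Jensen's inequality, minimized at the center (giving $n^{1-q}$); if $1 \leq p \leq 2$ then $q \in [0,1]$ and the roles of maximum and minimum are reversed. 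Combining the two regimes yields
\[
n^{-\alpha} \;\leq\; \sum_{i=1}^n |x_i|^{2p-2} \;\leq\; n^{\alpha}, \qquad x \in \SSS_p^{n-1},
\]
where $\alpha := |2/p - 1|$ depends only on $p$.

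For the second step, inserting these pointwise bounds into the integral representation of $C_{n,p}^{-1}$ given in Proposition \ref{prop:relation cone surface measure} and using that $\mu_p$ is a probability measure immediately gives $n^{-\alpha/2} \leq C_{n,p}^{-1} \leq n^{\alpha/2}$, and hence $n^{-\alpha/2} \leq C_{n,p} \leq n^{\alpha/2}$. Multiplying by $\bigl(\sum_{i=1}^n |x_i|^{2p-2}\bigr)^{1/2} \in [n^{-\alpha/2}, n^{\alpha/2}]$ then yields $n^{-\alpha} \leq h_{n,p}(x) \leq n^{\alpha}$, so the lemma holds with $C := |2/p - 1|$ (or any larger constant depending only on $p$).

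I do not foresee any genuine obstacle: the only substantive ingredient is the simplex optimization, which is a direct application of Jensen's inequality after splitting on whether $p \leq 2$ or $p \geq 2$. The resulting exponent is certainly not optimal, but the polynomial bound $n^{\pm C}$ is all that is required for the applications of the lemma in Theorems \ref{thm:CLT} and \ref{thm:ldp}.
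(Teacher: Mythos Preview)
Your proposal is correct and follows essentially the same route as the paper's proof: the same substitution $y_i = |x_i|^p$ reducing to bounding $\sum y_i^{(2p-2)/p}$ on the simplex, followed by the same two-step argument (pointwise bound on the sum, then on $C_{n,p}$). The only cosmetic difference is that the paper phrases the simplex bound as a consequence of H\"older's inequality and records it uniformly as $\min\{n^{1-\alpha},1\}\le \sum y_i^\alpha \le \max\{n^{1-\alpha},1\}$, whereas you split explicitly on $p\le 2$ versus $p\ge 2$ and appeal to Jensen; the resulting exponent $|2/p-1|$ is the same.
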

\begin{proof}
It suffices to show that there is a constant $A>0$ such that $n^{-2A}\leq \sum_{i=1}^n|x_i|^{2p-2}
\leq n^{2A}$ for all $x\in\SSS^{n-1}_p$. Indeed, from this it would follow that $n^{-A} \leq C_{n,p}\leq n^{A}$ by the definition of $C_{n,p}$, which would yield the claim. Write $y_i:= |x_i|^p\geq 0$, so that $\sum_{i=1}^n y_i = 1$. It is an easy consequence of H\"older's inequality that under this constraint we always have $\sum_{i=1}^{n} y_i^{\alpha} \leq \max\{n^{1-\alpha},1\}$ and $\sum_{i=1}^{n} y_i^{\alpha} \geq \min\{n^{1-\alpha},1\}$ for all $\alpha\geq 0$.  Taking $\alpha =(2p-2)/p$, we obtain the required bounds on $\sum_{i=1}^{n} y_i^{\alpha} = \sum_{i=1}^n|x_i|^{2p-2}$.
\end{proof}

The proofs of our results rely on the following probabilistic representation for the cone probability measure on $\SSS_p^{n-1}$ for $p\in[1,\infty)$ (and for the uniform distribution over $\B_p^n$), which is due to Schechtman and Zinn \cite{SchechtmanZinn} and was independently obtained by Rachev and R\"uschendorf in \cite{RachevRueschendorf}.

\begin{proposition}\label{prop:schechtman zinn}
Let $n\in\N$ and $p\in[1,\infty)$. Suppose that $Z_1,\ldots,Z_n$ are independent $p$-generalized Gaussian random variables whose distribution has density
$$
f_p(x):= {1\over 2p^{1/p}\Gamma(1+{1\over p})}\,e^{-|x|^p/p}
$$
with respect to the Lebesgue measure on $\R$. Then, for $Z:=(Z_1,\ldots,Z_n)\in\R^n$, we have:
\vskip 1mm
\noindent(i) The random vector $Z/\|Z\|_p\in\SSS_p^{n-1}$ is independent of $\|Z\|_p$ and its distribution is $\mu_p$.

\noindent (ii) If $U$ is a random variable uniformly distributed on $[0,1]$ and independent of $Z$, then the random vector $U^{1/n} Z/\|Z\|_p$ is uniformly distributed on $\B_p^n$.
\end{proposition}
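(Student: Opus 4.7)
The plan is to set up an $\ell_p$-polar integration formula on $\R^n$ relating Lebesgue measure to the cone measure $\mu_p$, and then to exploit the fact that the joint density of $Z$ depends on $z$ only through $\|z\|_p$. Concretely, I would first establish the identity
\[
\int_{\R^n} g(z)\,\dif z \;=\; n\,|\B_p^n|\int_0^\infty r^{n-1}\!\int_{\SSS_p^{n-1}} g(rx)\,\dif\mu_p(x)\,\dif r
\]
for every nonnegative Borel $g\colon\R^n\to\R$. By a monotone-class argument it is enough to verify it on indicators of ``cone shells'' $\{z : z/\|z\|_p\in A,\ \|z\|_p\in[0,b]\}$ with $A\subseteq\SSS_p^{n-1}$ Borel and $b>0$; for these the left-hand side equals $|b\cdot\{rx:x\in A,\,r\in[0,1]\}|=b^n|\B_p^n|\mu_p(A)$ by the scaling $|tK|=t^n|K|$ and the definition \eqref{defn:cone}, while the right-hand side evaluates directly to $\mu_p(A)\int_0^b n r^{n-1}|\B_p^n|\,\dif r$, which matches.

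For part (i), I would then observe that the density $f_p^{\otimes n}(z)=c_p^n\,e^{-\|z\|_p^p/p}$, with $c_p=(2p^{1/p}\Gamma(1+1/p))^{-1}$, depends only on $\|z\|_p$. Plugging $g(z)=f_p^{\otimes n}(z)\mathbf{1}\{z/\|z\|_p\in A\}\mathbf{1}\{\|z\|_p\in B\}$ into the polar formula yields
\[
\Pro\!\bigl[Z/\|Z\|_p\in A,\ \|Z\|_p\in B\bigr]
\;=\;\mu_p(A)\cdot n|\B_p^n|c_p^n\!\int_B r^{n-1}e^{-r^p/p}\,\dif r,
\]
which factorizes as a function of $A$ times a function of $B$. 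This simultaneously gives the independence of $Z/\|Z\|_p$ and $\|Z\|_p$ and, upon taking $B=(0,\infty)$, forces the law of $Z/\|Z\|_p$ to coincide with $\mu_p$ (the radial factor must integrate to $1$, which also pins down the normalization of the density of $\|Z\|_p$).

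For part (ii), I would write $W=U^{1/n}Z/\|Z\|_p$ and note that, by (i) and the independence of $U$ from $Z$, the pair $(W/\|W\|_p,\|W\|_p)=(Z/\|Z\|_p,U^{1/n})$ consists of two independent components with laws $\mu_p$ on $\SSS_p^{n-1}$ and density $nr^{n-1}\mathbf{1}_{[0,1]}(r)$ on $\R$, respectively. Applying the polar formula to $g=|\B_p^n|^{-1}\mathbf{1}_{\B_p^n}$ shows that the push-forward of the uniform law on $\B_p^n$ under $z\mapsto(z/\|z\|_p,\|z\|_p)$ has exactly this same product form, so the two distributions of $W$ agree. The only genuinely geometric ingredient is the polar formula itself; once it is in hand, both parts reduce to straightforward bookkeeping, and the main care is to ensure that the cone-shell $\pi$-system is rich enough to identify the joint law, which it is since such shells generate the Borel $\sigma$-algebra on $\R^n\setminus\{0\}$.
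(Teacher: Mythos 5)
Your argument is correct and complete: the $\ell_p$-polar integration formula is verified on a generating $\pi$-system of cone shells, the factorization of the joint law of $(Z/\|Z\|_p,\|Z\|_p)$ follows because the density $c_p^n e^{-\|z\|_p^p/p}$ is radial in the $\|\cdot\|_p$ sense, and part (ii) correctly reduces to matching the product laws of $(z/\|z\|_p,\|z\|_p)$, which identifies the law of $W$ since that map is a bijection off the origin. The paper itself does not prove this proposition --- it cites it as a known result of Schechtman--Zinn and of Rachev--R\"uschendorf --- so there is no in-paper proof to compare against; your polar-decomposition argument is essentially the standard proof of that cited result (as in Naor--Romik), and it is sound as written.
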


\subsection{Large deviations principles}

We start with the definition of a large deviations principle. In this subsection we denote for clarity the space dimension by $d$ instead of $n$ in order to distinguish it from our index parameter $n$. Finally, we make the assumption that all random objects we are dealing with are defined on a common probability space $(\Omega,\mathcal F,\Pro)$. For thorough introductions to the theory of large deviations, we refer the reader to the monographs \cite{DZ,dH} or the book \cite{Kallenberg}.

\begin{df}\label{def:ldp}
Let $\bX:=(X^{(n)})_{n\in\N}$ be a sequence of random vectors taking values in $\R^d$. Further, let $s:\N\to(0,\infty]$ be a positive sequence and $\mathcal{J}:\R^d\to[0,\infty]$ be a lower semi-continuous function. We say that $\bX$ satisfies a large deviations principle (LDP) with speed $s(n)$ and rate function $\mathcal{J}$ if
\begin{equation*}\label{eq:LDPdefinition}
\begin{split}
-\inf_{x\in A^\circ}\mathcal{J}(x) &\leq\liminf_{n\to\infty}{1\over s(n)}\log \Pro\big(X^{(n)}\in A\big)\\
&\leq\limsup_{n\to\infty}{1\over s(n)}\log \Pro\big(X^{(n)}\in A\big) \leq-\inf_{x\in\bar{A}}\mathcal{J}(x)
\end{split}
\end{equation*}
for all Borel sets $A\subseteq \R^d$. If $\mathcal J$ has compact level sets $\{x\in\R^d\,:\, \mathcal{J}(x) \leq \alpha \}$, $\alpha\in\R$, then $\mathcal J$ is called a good rate function. 
\end{df}

We notice that on the class of all $\mathcal{J}$-continuity sets, that is, on the class of Borel sets $A\subseteq \R^d$ for which $\mathcal{J}(A^\circ)=\mathcal{J}(\bar{A})$ with $\mathcal{J}(A):=\inf\{\mathcal{J}(x):x\in A\}$, one has the exact limit relation
$$
\lim_{n\to\infty}{1\over s(n)}\log \Pro\big(X^{(n)}\in A\big) = -\mathcal{J}(A)\,.
$$

Let $d\geq 1$ be a fixed integer and let $X$ be an $\R^d$-valued random vector. We write
$$
\Lambda(u)=\Lambda_X(u):=\log \E\, e^{\langle X,u\rangle}\,,\qquad u\in\R^d\,,
$$
for the cumulant generating function of $X$. Moreover, we define the (effective) domain of $\Lambda$ to be the set $D_{\Lambda}:=\{u\in\R^d:\Lambda(u)<\infty\}\subseteq\R^d$.

\begin{df}
The \textit{Legendre--Fenchel transform} of a convex function $\Lambda:\R^d \to(-\infty,+\infty]$ is defined as
$$
\Lambda^*(x):=\sup_{u\in\R^d}\big[\langle u, x\rangle -\Lambda(u)\big]\,,\qquad x\in\R^d\,.
$$
\end{df}

The Legendre--Fenchel transform of the cumulant generating function plays a crucial r\^ole in the following result, usually referred to as Cram\'er's theorem, (see, e.g., \cite[Theorem 2.2.30, Theorem 6.1.3, Corollary 6.1.6]{DZ} or \cite[Theorem 27.5]{Kallenberg}).

\begin{proposition}[Cram\'er's theorem]\label{prop:cramer}
Let $X,X_1,X_2,\ldots$ be independent and identically distributed random vectors taking values in $\R^d$. Assume that $0\in D_{\Lambda}^\circ$. Then the partial sums ${1\over n}\sum_{i=1}^n X_i$, $n\in\N$, satisfy an LDP with speed $n$ and good rate function $\Lambda^*$.
\end{proposition}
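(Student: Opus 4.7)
The plan is to establish the LDP in two halves—upper bound for closed sets and lower bound for open sets—via exponential Chebyshev estimates and exponential tilting (the Cramér transform). First I would record the basic properties of $\Lambda^*$. As a pointwise supremum of affine functions $x\mapsto\langle u,x\rangle - \Lambda(u)$, $\Lambda^*$ is convex and lower semi-continuous, and $\Lambda(0)=0$ forces $\Lambda^*\geq 0$. The hypothesis $0\in D_\Lambda^\circ$ supplies a closed ball $\overline{B(0,r)}\subset D_\Lambda$ on which $\Lambda$ is bounded by convexity; choosing $u=rx/\|x\|$ in the definition of $\Lambda^*$ then gives $\Lambda^*(x)\geq r\|x\| - \sup_{\|u\|=r}\Lambda(u)$, so $\Lambda^*$ grows superlinearly. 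Combined with lower semicontinuity this makes the sublevel sets $\{\Lambda^*\leq \alpha\}$ compact, so $\Lambda^*$ is a \emph{good} rate function.

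For the upper bound, the core estimate is the exponential Chebyshev inequality: for any $u\in\R^d$ and $c\in\R$,
\[
\Pro\bigl(\langle u,\bar{X}_n\rangle \geq c\bigr) \leq e^{-n(c-\Lambda(u))}, \qquad \bar{X}_n := \tfrac{1}{n}\sum_{i=1}^n X_i.
\]
Given a compact $K\subset \R^d$ and $\eps>0$, for each $x\in K$ I would pick $u_x$ with $\langle u_x,x\rangle - \Lambda(u_x)\geq \min\{\Lambda^*(x),\eps^{-1}\} - \eps$; the open half-spaces $\{y:\langle u_x,y\rangle > \langle u_x,x\rangle - \eps\}$ cover $K$, so a finite subcover together with the union bound yields $\limsup \tfrac{1}{n}\log\Pro(\bar{X}_n\in K)\leq -\inf_K \Lambda^* + 2\eps$. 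To pass from compact to closed sets one invokes \emph{exponential tightness}: for every $\alpha$ there is a compact $K_\alpha$ with $\limsup \tfrac{1}{n}\log\Pro(\bar{X}_n\notin K_\alpha)\leq -\alpha$. This is a direct consequence of $0\in D_\Lambda^\circ$, which yields $\E e^{\pm\eps X^{(i)}}<\infty$ coordinatewise for small $\eps$, and hence the one-sided estimates $\Pro(|\bar{X}_n^{(i)}|\geq R)\leq 2e^{-n(R\eps - C)}$.

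For the lower bound I would use the Cramér tilt. Fix a point $x$ in the interior of the effective domain of $\Lambda^*$ and $\delta>0$ small. Convex analysis supplies $u^*\in D_\Lambda^\circ$ with $\nabla\Lambda(u^*)=x$, so that $\Lambda^*(x)=\langle u^*,x\rangle - \Lambda(u^*)$. Define the tilted law $\tilde\mu(\dint y):=e^{\langle u^*,y\rangle - \Lambda(u^*)}\mu(\dint y)$, where $\mu$ is the law of $X$. Under $\tilde\mu^{\otimes n}$ the i.i.d.\ sample has mean $x$ and finite variance (since $\tilde\mu$ inherits all exponential moments near $0$), so the weak law of large numbers gives $\tilde\Pro(\bar{X}_n\in B(x,\delta))\to 1$. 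Undoing the tilt,
\[
\Pro\bigl(\bar{X}_n\in B(x,\delta)\bigr) = \tilde\E\!\left[e^{-n\langle u^*,\bar{X}_n\rangle + n\Lambda(u^*)}\mathbf{1}_{\{\bar{X}_n\in B(x,\delta)\}}\right] \geq e^{-n\Lambda^*(x)-n\|u^*\|\delta}\,\tilde\Pro\bigl(\bar{X}_n\in B(x,\delta)\bigr),
\]
giving $\liminf\tfrac{1}{n}\log\Pro(\bar{X}_n\in B(x,\delta))\geq -\Lambda^*(x) - \|u^*\|\delta$. Letting $\delta\downarrow 0$ and taking the infimum over such $x$ lying in an open set $G$ completes the lower bound.

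The main obstacle is handling points $x$ that lie on the boundary of the effective domain of $\Lambda^*$ in the lower bound, where the dual point $u^*$ may escape $D_\Lambda^\circ$ or fail to exist. The standard remedy is approximation from the relative interior of $\{\Lambda^*<\infty\}$, using Rockafellar's continuity theorem for convex functions to ensure that $\Lambda^*$ at the approximating interior points converges to $\Lambda^*(x)$; boundary points where $\Lambda^*$ is already $+\infty$ contribute trivially, since the infimum over $G$ in the lower bound may safely be restricted to points where $\Lambda^*$ is finite.
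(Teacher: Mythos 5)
The paper does not actually prove this proposition; it quotes it from the literature (Dembo--Zeitouni, Kallenberg), so your attempt has to be measured against the standard proof. Your treatment of the upper bound is essentially correct and complete: nonnegativity, lower semicontinuity and superlinear growth of $\Lambda^*$ (hence goodness) from $\Lambda(0)=0$ and $0\in D_\Lambda^\circ$, the Chebyshev-plus-finite-covering argument on compact sets, and exponential tightness from coordinatewise exponential moments are all sound.

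The gap is in the lower bound. The assertion that for every $x$ in the interior of $\{\Lambda^*<\infty\}$ convex analysis supplies $u^*\in D_\Lambda^\circ$ with $\nabla\Lambda(u^*)=x$ is false under the stated hypothesis: only $0\in D_\Lambda^\circ$ is assumed, so $D_\Lambda$ may be a proper subset of $\R^d$ and $\Lambda$ need not be steep at $\partial D_\Lambda$, in which case $\nabla\Lambda(D_\Lambda^\circ)$ misses an open set of points at which $\Lambda^*$ is nevertheless finite. Concretely, in $d=1$ take $\mu(\dint y)=c\,e^{-|y|}(1+y^2)^{-2}\,\dint y$. Then $D_\Lambda=[-1,1]$, the one-sided derivative $x_0:=\Lambda'(1^-)$ is finite, and $\Lambda^*(x)=x-\Lambda(1)<\infty$ for every $x>x_0$, so such $x$ lie in the interior of the domain of $\Lambda^*$; yet no tilting parameter with mean $x$ exists, and the tilt at $u^*=1$ produces a law with mean $x_0<x$, so your law-of-large-numbers step yields $\tilde\Pro\bigl(\bar X_n\in B(x,\delta)\bigr)\to 0$ rather than $1$. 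Your proposed remedy --- approximation from the relative interior of $\{\Lambda^*<\infty\}$ --- does not address this, because the problematic points already lie in that interior. Repairing the argument requires one of the standard extra devices: prove the lower bound only at exposed points whose exposing hyperplane lies in $D_\Lambda^\circ$, and then show, specifically for i.i.d.\ sums (e.g.\ by conditioning $\mu$ on compact sets and passing to the limit, as in Dembo--Zeitouni's proof of their Theorem 6.1.3), that restricting the infimum of $\Lambda^*$ over an open set to such points does not increase it. As written, your lower bound covers only the steep case; it does happen to suffice for the particular cumulant generating function $\Lambda(s,t)$ arising in this paper, which is steep at the boundary of its domain, but not for the proposition as stated.
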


It will be important for us to deduce from an already existing large deviations principle a new one by applying a suitable transformation. The next result allows such a `transport' by means of a continuous function. This device is known as the contraction principle and we refer to \cite[Theorem 4.2.1]{DZ} or \cite[Theorem 27.11(i)]{Kallenberg}.

\begin{proposition}[Contraction principle]\label{prop:contraction principle}
Let $d_1,d_2\in\N$, and let $F:\R^{d_1}\to\R^{d_2}$ be a continuous function. Further, let $\bX=(X^{(n)})_{n\in\N}$ be a sequence of $\R^{d_1}$-valued random vectors that satisfies an LDP with speed $s(n)$ and the good rate function $\mathcal{J}_\bX$. Then the sequence $\bY:=(F(X^{(n)}))_{n\in\N}$ of $\R^{d_2}$-valued random vectors satisfies an LDP with the same speed and good rate function $\mathcal{J}_\bY=\mathcal{J}_\bX\circ F^{-1}$, i.e., $\mathcal{J}_\bY(y):=\inf\{\mathcal{J}_\bX(x):F(x)=y\}$, $y\in\R^{d_2}$, with the convention that $\mathcal{J}_\bY(y)=+\infty$ if $F^{-1}(\{y\})=\emptyset$.
\end{proposition}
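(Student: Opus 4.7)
My plan is to verify the three standard ingredients of an LDP for $\bY$: (i) the goodness of the transferred rate function $\mathcal J_\bY$, (ii) the upper bound on closed sets, and (iii) the lower bound on open sets. The key observation throughout is the tautology $\{F(X^{(n)})\in A\}=\{X^{(n)}\in F^{-1}(A)\}$, which reduces every probabilistic statement about $\bY$ to one about $\bX$, together with the fact that $F$ being continuous implies $F^{-1}$ preserves both openness and closedness.

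First I would show that $\mathcal J_\bY$ is a good rate function. For any $\alpha\in[0,\infty)$, I claim that the level set $\{y\in\R^{d_2}:\mathcal J_\bY(y)\leq\alpha\}$ coincides with $F(\{x\in\R^{d_1}:\mathcal J_\bX(x)\leq\alpha\})$. Indeed, if $\mathcal J_\bY(y)\leq\alpha$, then by definition there is a sequence $x_k\in F^{-1}(\{y\})$ with $\mathcal J_\bX(x_k)\to\mathcal J_\bY(y)$; by goodness of $\mathcal J_\bX$, any such sequence is eventually contained in the compact set $\{\mathcal J_\bX\leq\alpha+1\}$, so along a subsequence it converges to some $x_\infty$ with $F(x_\infty)=y$ (by continuity of $F$) and $\mathcal J_\bX(x_\infty)\leq\alpha$ (by lower semi-continuity of $\mathcal J_\bX$). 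This also shows that the infimum in the definition of $\mathcal J_\bY$ is attained whenever $F^{-1}(\{y\})$ is nonempty, hence the level set is exactly the continuous image of a compact set, which is compact. Lower semi-continuity of $\mathcal J_\bY$ follows from compactness (hence closedness) of its level sets.

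Next, I would establish the upper and lower bounds simultaneously, since both rest on the identity
\[
\inf_{x\in F^{-1}(A)}\mathcal J_\bX(x)=\inf_{y\in A}\mathcal J_\bY(y),
\]
valid for any $A\subseteq\R^{d_2}$, which follows directly from the definition of $\mathcal J_\bY$ and the partition $F^{-1}(A)=\bigsqcup_{y\in A}F^{-1}(\{y\})$. For a closed set $C\subseteq\R^{d_2}$, the preimage $F^{-1}(C)$ is closed by continuity of $F$, so
\[
\limsup_{n\to\infty}\frac{1}{s(n)}\log\Pro\bigl(Y^{(n)}\in C\bigr)=\limsup_{n\to\infty}\frac{1}{s(n)}\log\Pro\bigl(X^{(n)}\in F^{-1}(C)\bigr)\leq -\inf_{x\in F^{-1}(C)}\mathcal J_\bX(x),
\]
and the right-hand side equals $-\inf_{y\in C}\mathcal J_\bY(y)$ by the identity above. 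The argument for an open set $G\subseteq\R^{d_2}$ is entirely analogous: $F^{-1}(G)$ is open, and applying the lower bound from the LDP for $\bX$ gives
\[
\liminf_{n\to\infty}\frac{1}{s(n)}\log\Pro\bigl(Y^{(n)}\in G\bigr)\geq -\inf_{x\in F^{-1}(G)}\mathcal J_\bX(x)=-\inf_{y\in G}\mathcal J_\bY(y).
\]

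The only genuinely nontrivial step is the goodness part in the first paragraph, because one must confirm that the infimum defining $\mathcal J_\bY$ is attained; the remainder amounts to pushing forward the LDP along a topologically well-behaved map. Lower semi-continuity of $\mathcal J_\bY$ is an automatic byproduct of having compact level sets, so no separate argument for it is needed. The convention $\mathcal J_\bY(y)=+\infty$ when $F^{-1}(\{y\})=\varnothing$ is consistent with the infimum-over-empty-set convention and plays no role beyond making the statement formally correct.
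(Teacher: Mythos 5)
Your proof is correct: the paper itself gives no proof of this proposition, citing it as Cram\'er/contraction-principle background (\cite[Theorem 4.2.1]{DZ}, \cite[Theorem 27.11(i)]{Kallenberg}), and your argument is essentially the standard textbook proof found there — pull back open/closed sets along the continuous $F$, use the identity $\inf_{F^{-1}(A)}\mathcal J_\bX=\inf_A\mathcal J_\bY$, and obtain goodness of $\mathcal J_\bY$ by showing each level set is the image $F(\{\mathcal J_\bX\le\alpha\})$ of a compact set, with the attainment argument via a minimizing sequence, compactness and lower semi-continuity handled correctly. No gaps; the only remark is that attainment of the infimum is needed only for the goodness claim, not for the upper and lower bounds, exactly as you observe.
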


\section{Arithmetic-geometric mean CLT for $p$-balls}

We shall now present the proof of Theorem \ref{thm:CLT}.

\begin{proof}[Proof of Theorem \ref{thm:CLT}]
Let $(X_1^{(n)}, \ldots, X_n^{(n)})$ be chosen uniformly at random from $\B_p^n$. Consider independent $p$-generalized Gaussians $Z_1,\dots,Z_n$ and a random variable $U$ uniformly distributed on $[0,1]$ and independent of the $Z_i$'s. We know from the Schechtman--Zinn result (Proposition \ref{prop:schechtman zinn}) that
\[
(X_1^{(n)}, \ldots, X_n^{(n)}) \stackrel{\text{d}}{=} U^{1/n} \frac{  (Z_1,\dots,Z_n)}{\|(Z_1,\dots,Z_n)\|_p}.
\]
With this representation, we have
\begin{equation} \label{eq:AGM equivalent}
\mathcal R_n = \frac{\Big(\prod\limits_{i=1}^{n}|X_i^{(n)}|\Big)^{1/n}}{ \Big(\frac1n \sum\limits_{i=1}^n {|X_i^{(n)}|}^p\Big)^{1/p}} \eqdistr
\frac{\Big(\prod\limits_{i=1}^{n}|Z_i|\Big)^{1/n}}{ \Big(\frac1n \sum\limits_{i=1}^n |Z_i|^p\Big)^{1/p}} = \frac{\exp \Big(\frac{1}{n}\sum_{i=1}^n \log|Z_i| \Big) }{ \Big(\frac1n \sum\limits_{i=1}^n |Z_i|^p\Big)^{1/p}},
\end{equation}
where \,$\eqdistr$\, denotes equality of distributions. If $(X_1^{(n)}, \ldots, X_n^{(n)})$ is chosen at random with respect to the cone measure $\mu_p$ on $\SSS_p^{n-1}$, then the factor $U^{1/n}$ does not appear in the Schechtman--Zinn representation (see Proposition \ref{prop:schechtman zinn} (i)) and the formula for $\mathcal R_n$ above does not change as the corresponding factor cancels out. For any $a\in\R$, the tail probability for $\mathcal R_n$ reads as follows,
\begin{align*}
& \Pro\big[\mathcal R_n \ge e^{m_p+a/\sqrt{n}}\,\big] \\
&\quad= \Pro\bigg[ \exp\bigg( -m_p -  \frac{a}{\sqrt{n}} + \frac1 n \sum_{i=1}^{n} \log |Z_i|  \bigg) \geq  \bigg(\frac1n \sum_{i=1}^n|Z_i|^p\bigg)^{1/p}\, \bigg] \\
&\quad= \Pro\bigg[ \exp\bigg( -a +  \frac{1}{\sqrt{n}} \sum_{i=1}^{n} (\log |Z_i| - m_p ) \bigg) \geq \bigg(1+ \frac1n \sum_{i=1}^n(|Z_i|^p - 1)  \bigg)^{\sqrt{n}/p}\, \bigg].
\end{align*}
Taking the logarithm, we can further write this  as
\begin{align} \label{eq:tail R}
&\mathrel{\phantom{=}} \Pro\big[\mathcal R_n \ge e^{m_p+a/\sqrt{n}}\,\big] \notag  \\
&=
\Pro\bigg[ \frac{1}{\sqrt{n} } \sum_{i=1}^{n}(\log |Z_i| - m_p) -  \frac{\sqrt{n}}{p}
\log \bigg(1+ \frac 1n \sum_{i=1}^n (|Z_i|^p - 1)\bigg) \geq a\bigg] \\
&=
\Pro\bigg[ \frac{1}{\sqrt{n} } \sum_{i=1}^{n}(\log |Z_i| - m_p)
-
\frac{1}{p\sqrt{n}} \sum_{i=1}^n (|Z_i|^p - 1) - \frac{\sqrt{n}}{p} \alpha\bigg(\frac 1n \sum_{i=1}^n (|Z_i|^p - 1)\bigg) \geq a\bigg], \notag
\end{align}
where the function $\alpha(x)$ is defined by $\log (1+x) = x + \alpha(x)$ for $x>-1$, so that  $\alpha(x) = o(x)$ as $x\to 0$.
Using Mathematica, we can find that $\E \log |Z_1|=m_p$, $\E |Z_1|^p=1$, and $\E |Z_1|^p \log |Z_1| = \frac1p(\log p + \psi(1+ \frac 1p) )=m_p + 1$, where in the last equality we used the property of the digamma function that $\psi(x+1) = \psi(x) +\frac1x$, $x>0$.  Let $(N_1, N_2)$ be a bivariate centered normal random vector with the same covariance matrix as that of $(\log|Z_1|, |Z_1|^p)$,  i.e.,\ with the diagonal elements $\Var(\log|Z_1|) = \frac{1}{p^2} \psi'(\frac1p)$ and $\Var(|Z_1|^p)=p$ and the covariance terms equal~$1$. Then the bivariate central limit theorem  states that
$$
\bigg( \frac{1}{\sqrt{n} } \sum_{i=1}^{n}(\log |Z_i| - m_p), \frac{1}{p\sqrt{n}} \sum_{i=1}^n (|Z_i|^p - 1)\bigg) \todistr (N_1, p^{-1}N_2),
$$
where $\todistrnoN$ denotes the distributional convergence. 
Further, by the strong law of large numbers, $\frac 1n \sum_{i=1}^n (|Z_i|^p - 1)$ converges to $0$ a.s. Using the relation $\alpha(x) = o(x)$ as $x\to 0$ together with  Slutsky's theorem, we arrive at
$$
\frac{\sqrt{n}}{p} \alpha\bigg(\frac 1n \sum_{i=1}^n (|Z_i|^p - 1)\bigg)
=
\frac 1{p\sqrt n} \sum_{i=1}^n (|Z_i|^p - 1) \cdot \frac{\alpha\bigg(\frac 1n \sum_{i=1}^n (|Z_i|^p - 1)\bigg)}{\frac 1n \sum_{i=1}^n (|Z_i|^p - 1)} \todistr 0
$$
since the first factor converges to $p^{-1}N_2$ in distribution, whereas the second one converges to $0$ a.s. Taking everything together and using the continuous mapping theorem together with Slutsky's theorem, we arrive at
$$
\frac{1}{\sqrt{n} } \sum_{i=1}^{n}(\log |Z_i| - m_p)
-
\frac{1}{p\sqrt{n}} \sum_{i=1}^n (|Z_i|^p - 1) - \frac{\sqrt{n}}{p} \alpha\bigg(\frac 1n \sum_{i=1}^n (|Z_i|^p - 1)\bigg)
\todistr
N_1 - p^{-1} N_2.
$$
Recalling \eqref{eq:tail R}, we obtain
\begin{equation}\label{eq:CLT_proof1}
\lim_{n \to \infty} \Pro\big[\mathcal R_n \ge e^{m_p+a/\sqrt{n}}\,\big]= \Pro \big[N_1 - p^{-1} N_2 \ge a\big] = 1-\Phi \left( \frac{pa}{\sqrt{\psi'(\frac1p)-p}} \right),
\end{equation}
where we used that $\Var(N_1 - p^{-1} N_2 )= \frac{1}{p^2} \psi'(\frac1p) - 2 \frac1p + \frac{1}{p} = \frac{1}{p^2} \psi'(\frac1p) -\frac{1}{p} $. 

To prove that $e^{m_p+a/\sqrt{n}}$ can be replaced with $e^{m_p} (1 + a/\sqrt{n})$ in the above equality~\eqref{eq:CLT_proof1}, fix some $\eps>0$ and note that $1+a/\sqrt n$ is sandwiched between $e^{(a-\eps)/\sqrt n}$ and  $e^{(a+\eps)/\sqrt n}$ provided $n$ is sufficiently large. Thus, for all such $n$,
$$
\Pro\big[\mathcal R_n \ge e^{m_p+(a+\eps)/\sqrt{n}}\,\big]
\leq
\Pro\big[\mathcal R_n \ge e^{m_p} (1+a/\sqrt{n})\,\big]
\leq
\Pro\big[\mathcal R_n \ge e^{m_p+(a-\eps)/\sqrt{n}}\,\big].
$$
Taking first the limit $n\to\infty$ (which is given by~\eqref{eq:CLT_proof1}) and then letting $\eps\downarrow 0$ and using the continuity of the function $\Phi$, we arrive at
$$
\lim_{n \to \infty} \Pro\bigg[\mathcal R_n \ge e^{m_p} \Big(1+\frac a{\sqrt{n}}\Big)\,\bigg]=
 1-\Phi \left( \frac{pa}{\sqrt{\psi'(\frac1p)-p}} \right).
$$
This proves the claim of Theorem~\ref{thm:CLT} for the uniform distribution on $\B_p^n$ and the cone probability measure $\mu_p$.

Consider now the case where $(X_1^{(n)},\dots, X_n^{(n)})$ is chosen with respect to the probability measure $\sigma_p$ on $\SSS_p^{n-1}$. It was proved in \cite[Theorem 2]{NR2003} that the total variation distance between $\mu_p$ and $\sigma_p$, we write $\dint_{\text{TV}}(\mu_p,\sigma_p)$, is bounded above by a constant $c_p\in(0,\infty)$ (only depending on $p$) times $n^{-1/2}$. Let us consider the sets
\[
A_n:= \left\{x=(x_1,\dots,x_n)\in\SSS_p^{n-1}\,:\, \frac{\Big(\prod\limits_{i=1}^{n}|x_i|\Big)^{1/n}}{ \Big(\frac1n \sum\limits_{i=1}^n {|x_i|}^p\Big)^{1/p}} \geq e^{m_p} \Big(1+\frac a{\sqrt{n}}\Big)\,  \right\},\qquad n\in\N.
\]
As we have shown  above,
\begin{align}\label{eq:central limit mu_p}
\lim_{n\to\infty}\mu_p(A_n) =  1 - \Phi\left( \frac{pa}{\sqrt{\psi'(\frac1p)-p}} \right).
\end{align}
Since by \cite[Theorem 2]{NR2003}, $|\sigma_p(A_n) - \mu_p(A_n)| \leq \dint_{\text{TV}}(\sigma_p,\mu_p) \stackrel{n\to\infty}{\longrightarrow} 0$,
we can replace $\mu_p$ by $\sigma_p$ in \eqref{eq:central limit mu_p} above.

We shall now briefly give the argument for Remark \ref{rem: reduced form}. Since this trivially holds when $\bX_n \in \SSS_p^{n-1}$, we consider only the case of the uniform distribution on the ball $\B_p^n$. Define the quantity
\[
\widetilde{\mathcal R}_n := n^{1/p} \Big(\prod_{i=1}^{n}|X_i^{(n)}|\Big)^{1/n} = U^{1/n}\cdot \mathcal R_n,
\]
where $U$ is uniformly distributed on $[0,1]$ and independent of $Z_1,\dots,Z_n$ as in Proposition \ref{prop:schechtman zinn}. Then
\[
\sqrt{n}\Big(\log\widetilde{\mathcal R}_n - m_p\Big) = \sqrt{n}\Big(\log\mathcal R_n - m_p\Big) + n^{-1/2}\log U.
\]
Since $n^{-1/2}\log U \to 0$ in probability, it follows from Slutsky's theorem that $\sqrt{n}(\log\widetilde{\mathcal R}_n - m_p)$ satisfies the same central limit theorem as $\sqrt{n}(\log\mathcal R_n - m_p)$. Hence, the analogue of~\eqref{eq:CLT_proof1} with $\mathcal R_n$ replaced by $\widetilde{\mathcal R}_n$ holds.
\end{proof}

\section{Arithmetic-geometric mean LDP for $p$-balls}

We shall present here the proof of Theorem \ref{thm:ldp}. Essentially, the result is a consequence of Cram\'er's theorem, the contraction principle and the probabilistic representation of Schechtman and Zinn. However, most of the work is a careful analysis required to obtain the rate function, which is represented in terms of the inverse function of $H(x) = \psi(x)-\log x$.

\begin{proof}[Proof of Theorem \ref{thm:ldp}]
We want to study the large deviations behaviour of the ratios $\mathcal R_n$ given in \eqref{eq:AGM equivalent}. As we shall see later, $\mathcal R_n$ can be written as a function of the partial sums
\[
\frac{1}{n}\sum_{i=1}^n \big(\log|Z_i|, |Z_i|^p\big)
\]
with i.i.d.\ increments $\big(\log|Z_i|, |Z_i|^p\big)$, $i\in\N$, and where the $Z_i$'s are i.i.d.\ $p$-generalized Gaussians. The goal is to prove a large deviations principle via Cram\'er's theorem and then to apply the contraction principle. In order to do that, we first need to check that $0\in D_\Lambda^\circ$. We have
\begin{align*}
\Lambda(s,t) & = \log \E e^{\langle (\log|Z_i|, |Z_i|^p),(s,t) \rangle} \cr
& = \log \left(\frac{1}{p^{1/p}\Gamma(1+\frac{1}{p})}\int_{0}^{\infty}e^{s\log x - (\frac{1}{p}-t)x^p}\,\dint x \right)\cr
& = \log \left( \frac{1}{p(1-pt)^{1/p}\Gamma(1+\frac 1p)}\Big(\frac{p}{1-pt}\Big)^{s/p} \Gamma\Big(\frac{s+1}{p}\Big) \right)\cr
& = - \frac{1}{p}\log(1-pt)+\frac{s}{p}\big[\log(p) - \log(1-pt)\big]+\log\Big(\Gamma\Big(\frac{s+1}{p}\Big)\Big) - \log \Gamma\left(\frac 1p\right),
\end{align*}
where $t<\frac{1}{p}$ and $s>-1$. Otherwise, we have $\Lambda(s,t)=+\infty$ as
can be seen from the second line. In particular, $D_\Lambda = (-1,+\infty)\times(-\infty,\frac{1}{p})$ and thus $0\in D_\Lambda^\circ$. This means that we can apply Cram\'er's theorem (Proposition \ref{prop:cramer}) and obtain an LDP for the partial sums
\begin{align}\label{eq: bivariate cramer sum}
\frac{1}{n}\sum_{i=1}^n \big(\log|Z_i|, |Z_i|^p\big)
\end{align}
with speed $n$ and the rate function given by the Legendre--Fenchel transform of $\Lambda$, which is
\begin{align*}
\Lambda^*(\alpha,\beta) & = \sup_{(s,t)\in\R^2}\Big[\big\langle (s,t),(\alpha,\beta) \big\rangle - \Lambda(s,t)\Big] \cr
& = \sup_{(s,t)\in(-1,\infty)\times (-\infty,\frac{1}{p})} \Big[\big\langle (s,t),(\alpha,\beta) \big\rangle - \Lambda(s,t)\Big].
\end{align*}
First, we observe that the function $(s,t)\mapsto \big\langle (s,t),(\alpha,\beta) \big\rangle - \Lambda(s,t)$ is concave on $(-1,\infty)\times (-\infty,\frac{1}{p})$. In the following we shall show that if $\beta>e^{p\alpha}$, then the gradient of this function vanishes at some unique point $(s^*,t^*)\in (-1,\infty)\times (-\infty,\frac{1}{p})$. This implies that the supremum is attained at this point.

Forming the partial derivatives to find the point $(s^*,t^*)$, we obtain the following two conditions:
\begin{align}\label{eq:alpha}
\frac{\partial}{\partial s}\Lambda(s^*,t^*) = \frac{1}{p}\bigg[ \frac{\Gamma'(\frac{s^*+1}{p})}{\Gamma(\frac{s^*+1}{p})}+\log\Big(\frac{p}{1-pt^*}\Big)\bigg] = \alpha
\end{align}
and
\begin{align}\label{eq:beta}
\frac{\partial}{\partial t}\Lambda(s^*,t^*) = \frac{s^*+1}{1-pt^*} = \beta.
\end{align}
Let $\beta>e^{p\alpha}$. To prove that the system \eqref{eq:alpha} and \eqref{eq:beta} has indeed a unique solution $(s^*,t^*)$, we define $v^*\in(0,+\infty)$ as the unique solution to the equation
\[
H(v^*) = p\alpha - \log(\beta),
\]
where we recall that $H(x) = \psi(x)-\log(x)$ is negative on $(0, \infty)$. In order to see that this equation  has a unique solution $v^*\in (0, \infty)$, we use the representation (see, e.g., \cite[Eq.\ 8.361.8, page 903]{GradshteynRyzhik})
$$
H(x) = \psi(x) - \log x = - \int_0^\infty e^{-tx} \Big( \frac{1}{1-e^{-t}} - \frac1t \Big) \dint t.
$$
The integrand is positive and decreases monotonously in $x>0$ for every $t>0$. Therefore, $H$ is monotone increasing, and it follows from the monotone convergence theorem that $H(0+)=-\infty$ and $H(+\infty)=0$ (for the former equality, we use that the integral diverges at $x=0$). Hence, $H$ is an increasing bijection between $(0,+\infty)$ and $(-\infty,0)$, which also shows that $m_p$ defined in Theorem \ref{thm:CLT} is negative.

Let $s^*>-1$ and $t^*<\frac{1}{p}$ be defined by  
\[
v^* = \frac{s^* + 1}{p}\qquad\text{and}\qquad t^* = \frac{1}{p} - \frac{s^*+1}{\beta p}.
\]
Then it can be easily checked that \eqref{eq:alpha} and \eqref{eq:beta} hold. The uniqueness follows from the fact that \eqref{eq:alpha} and \eqref{eq:beta} imply
\begin{align}\label{eq:H alpha beta}
H\Big(\frac{s^*+1}{p}\Big) = p\alpha - \log(\beta) <0,
\end{align}
which has a unique solution $s^*>-1$ as we have seen above. From this we can determine the unique solution $t^*$ to \eqref{eq:beta}.


The contraction principle (Proposition~\ref{prop:contraction principle}) shall be applied to the random vectors in \eqref{eq: bivariate cramer sum} with the continuous function
\[
F:(x,y)\mapsto \frac{e^x}{y^{1/p}}, \qquad x \in \R,\, y \ge e^{px}
\]
and $F(x,y)=1$ otherwise.
This means that the sequence of random variables
\[
F\bigg(\frac{1}{n}\sum_{i=1}^n \big(\log|Z_i|, |Z_i|^p\big) \bigg) = \frac{\Big(\prod\limits_{i=1}^{n}|Z_i|\Big)^{1/n}}{ \Big(\frac1n \sum\limits_{i=1}^n |Z_i|^p\Big)^{1/p}} \eqdistr \mathcal R_n
\]
satisfies an LDP with speed $n$ and a certain rate function $\mathcal J_p$ to be determined in the following. For $\theta \in (0,1)$, the rate function is given by
\[
\mathcal J_p(\theta) = \inf_{(\alpha,\beta):\,F(\alpha,\beta)=\theta} \Lambda^*(\alpha,\beta) = \inf_{\beta>0,\,\alpha=\log(\theta) +\frac{1}{p}\log(\beta)} \Big[\alpha s^* +\beta t^* - \Lambda(s^*,t^*)\Big],
\]
and $\mathcal J_p \equiv +\infty$  on $\R \backslash (0,1]$. Let $\theta\in(0,1)$. Note that \eqref{eq:beta} implies
\begin{equation} \label{eq: t^*}
\frac{\beta}{\frac{s^*+1}{p}} = \frac{p}{1-pt^*} \qquad\text{and}\qquad \frac{\beta}{p}-\frac{s^*+1}{p} = \beta t^*.
\end{equation} 
Now, using \eqref{eq: t^*} to exclude $t^*$ from $\Lambda(s^*,t^*)$, we obtain
\[
\Lambda(s^*,t^*) = \frac{s^*+1}{p}\log\Big(\frac{\beta}{s^*+1}\Big) + \frac{s^*}{p}\log p+\log\Big(\Gamma\Big(\frac{s^*+1}{p}\Big)\Big)- \log \Gamma\left(\frac 1p\right).
\]
Hence, excluding $\alpha$ and $\beta t^*$ using \eqref{eq: t^*} for the latter,
\begin{align} \label{eq: I, s^*, beta}
\mathcal J_p(\theta) & = \inf_{\beta>0} \bigg[s^* \log \theta +\frac{s^*}{p}\log \beta +\frac{\beta}{p} -\frac{s^*+1}{p} - \Lambda(s^*,t^*)\bigg] \cr
 & = \inf_{\beta>0} \bigg[s^* \log \theta +\frac{\beta}{p} -\frac{s^*+1}{p} -\frac{1}{p}\log \beta  \cr
 &\quad +\frac{s^*+1}{p}\log (s^*+1) - \frac{s^*}{p}\log p-\log\Big(\Gamma\Big(\frac{s^*+1}{p}\Big)\Big)  + \log \Gamma\left(\frac 1p\right) \bigg] .
\end{align}
Note that the equalities $F(\alpha,\beta)=\theta$ and  \eqref{eq:H alpha beta} imply
\begin{equation} \label{eq: s^*}
p\log \theta = H\Big(\frac{s^*+1}{p}\Big),
\end{equation}
Since $s^*$ given by  \eqref{eq: s^*} is a function of $\theta$ only and is independent of $\beta$ (under  $F(\alpha,\beta)=\theta$), it is clear that the infimum in \eqref{eq: I, s^*, beta} with $\theta \in (0,1)$ is attained at $\beta=1$ (which minimizes $p^{-1}(\beta - \log \beta)$). Thus,
\begin{align*}
\mathcal J_p(\theta)
&= \log(\theta)s^* + \frac{s^*+1}{p} \Big[ \log \Big(\frac{s^*+1}{p}\Big) -1 \Bigr] - \log\Big(\Gamma\Big(\frac{s^*+1}{p}\Big)\Big) \\
&\qquad\qquad+
\frac{1}{p} + \frac{1}{p}\log(p)  +\log \Gamma\left(\frac 1p\right).
\end{align*}
Finally, using \eqref{eq: s^*} to exclude $s^*$ and recalling that $G_p(\theta) = H^{-1}(p\log(\theta))$, we obtain
\begin{align*}
\mathcal J_p(\theta)  & = \big[p G_p(\theta) -1\big] \log(\theta) + G_p(\theta) \big[\log\big(G_p(\theta)\big) - 1\big] -\log \Gamma\big(G_p(\theta)\big) \\
&\qquad\qquad + \frac{1}{p} + \frac{1}{p}\log(p) + \log \Gamma\left(\frac 1p\right),\qquad \theta\in(0,1).
\end{align*}
We shall now prove that $\mathcal J_p(1)=+\infty$, where by the contraction principle
\[
\mathcal J_p(1) = \inf_{F(\alpha,\beta)=1} \Lambda^*(\alpha,\beta) = \inf_{(\alpha,\beta):\,\beta \leq e^{p\alpha}} \Lambda^*(\alpha,\beta).
\] 
We claim that for all pairs $(\alpha,\beta)$ with the property that $\beta\leq e^{p\alpha}$, we have
\[
\Lambda^*(\alpha,\beta) \equiv \sup_{s>-1,\,t<\frac{1}{p}} \big[\alpha s+\beta t - \Lambda(s,t) \big] = +\infty.
\]
To prove this it is enough to consider a sequence of pairs $(s_k,t_k)$ such that $s_k\to+\infty$ (as $k\to+\infty$) and 
\[
t_k:= \frac{1}{p} - \frac{s_k+1}{\beta p}, \qquad k\in\N,
\]
and to show that $\alpha s_k+\beta t_k - \Lambda(s_k,t_k)\to+\infty$ as $k\to+\infty$. It follows from the definition of $t_k$ that if $v_k= (s_k+1)/p$, then
\[
1-pt_k = \frac{v_k p}{\beta} \qquad\text{and}\qquad \frac{p}{1-pt_k} = \frac{\beta}{v_k}.
\]
Using the expression for $\Lambda(s_k,t_k)$ and excluding $s_k$ and $t_k$, we get
\[
\alpha s_k +\beta t_k - \Lambda(s_k,t_k) = v_k(p\alpha -\log(\beta)) + (v_k\log(v_k)-v_k) -\log(\Gamma(v_k)) + c(\alpha,\beta,p) 
\] 
where  $c(\alpha,\beta,p)$ is a term independent of the sequence $v_k$. Note that $v_k\to+\infty$ as $k\to+\infty$. Hence, by Stirling's formula, we have
\[
v_k\log(v_k) - v_k - \log(\Gamma(v_k)) = \frac{1}{2} \log(v_k) - \frac{1}{2}\log(2\pi)+o(1).
\]
Since $p\alpha - \log(\beta) \geq 0$, we have, as $k\to+\infty$,
\[
\alpha s_k +\beta t_k - \Lambda(s_k,t_k) \to +\infty.
\]
This proves that $\mathcal J_p(1)=+\infty$.

Therefore, we have proved an LDP for $\mathcal R_n$ with speed $n$ and rate function $\mathcal J_p$ as stated in Remark~\ref{rem}.
Since the function $H$ is continuous, so is $G_p$ on $(0,1)$, and consequently the same holds for $\mathcal J_p$ on the interval $(0,1)$. Thus, the LDP for $\mathcal R_n$ yields the limiting behavior presented in the statement of Theorem \ref{thm:ldp} in the case of the uniform distribution on $\B_p^n$ or the cone measure on $\SSS_p^{n-1}$. We will now prove that the same LDP holds for the uniform distribution on $\SSS_p^{n-1}$. Let $A\subseteq \R$ be a Borel set and define
\[
D_n:= \left\{x=(x_1,\dots,x_n)\in\SSS_p^{n-1}\,:\, \frac{\Big(\prod\limits_{i=1}^{n}|x_i|\Big)^{1/n}}{ \Big(\frac1n \sum\limits_{i=1}^n {|x_i|}^p\Big)^{1/p}} \in A  \right\},\qquad n\in\N.
\]
Then,
\[
\frac{1}{n} \log \sigma_p(D_n) = \frac{1}{n}\log \int_{D_n} h_{n,p}(x) \, \mu_p(\dint x),
\]
and it follows from Lemma \ref{lem:estimate density} that, as $n\to\infty$,
\[
\Big| \frac{1}{n} \log \mu_p(D_n) - \frac{1}{n}\log \sigma_p(D_n)\Big| \to 0.
\]
This shows the LDP for $\mathcal R_n$in the case of the surface probability measure $\sigma_p$ on $\SSS_p^{n-1}$ with the same rate function $\mathcal J_p$.

The limit relations $\mathcal J_p(0+) = \mathcal J_p(1-) = +\infty$ immediately follow from lower semi-continuity of $\mathcal J_p$ and the identities $\mathcal J_p(0) = \mathcal J_p(1) = +\infty$.

\end{proof}

\bibliographystyle{plain}
\bibliography{agm}

\end{document}